\DeclareMathOperator*{\argmin}{arg\,min}
\newtheorem{proposition}{Proposition}
\title{\LARGE \bf On the Optimal Policies for Visibility-Based Target Tracking}
\author{Rui Zou, Hamid Emadi and Sourabh Bhattacharya
	\thanks{Rui Zou, Hamid Emadi and Sourabh Bhattacharya are with the Department of Mechanical Engineering, Iowa State University, Ames, IA 50011, USA
		{\tt\small \{rzou, emadi, sbhattac\}@iastate.edu}}%
}
\begin{document}

\maketitle
\thispagestyle{empty}
\pagestyle{empty}

\begin{abstract}

In this paper, we investigate a pursuit-evasion game in which a mobile observer tries to track a target in an environment containing obstacles. We formulate the game as an optimal control problem with state inequality constraint in a simple environment. We show that for some initial conditions, there are two different regimes in the optimal strategy of the pursuer depending on whether the state-constraint is activated. We derive the equations that characterize the switching time between the two regimes. The pursuer's optimal tracking strategy in a simple environment is further extended to a general environment with multiple polygonal obstacles. We propose techniques to construct a ``pursuit field" based on the optimal solutions to guide the motion of the observer in a general environment.

\end{abstract}

\section{Introduction}

Surveillance of mobile targets is a problem that arises in numerous applications. For example, a missile defense system must be able to detect and track suspicious targets before intercepting the targets. Companies, organizations and even homes use CCTV camera systems to monitor restricted regions for suspicious activities. Traffic cameras are widely used in assisting traffic control, detecting accidents and catching criminals. However, the problem of ``data deluge'' arises for many visual surveillance systems. For example, a surveillance system for detecting intruders will record or transmit the same amount of data whether the scene is active or not. To alleviate the data overload, the authors have proposed a new paradigm in their previous work \cite{bhattacharya2014vision,Warnell2015} called ``opportunistic sensing''. The idea is to deploy and activate sensors smartly to reduce the volume of unwanted data. Mobile sensors alleviate this problem to a certain extent. This work explores the problem of optimal motion for a mobile camera to track a mobile intruder in an environment contain obstacles. When both the observer and the target are mobile and cooperative, the task is to plan the motion of both entities among obstacles. However, when they are completely non-cooperative, a pursuit-evasion game between the observer and the target arises \cite{LaValle2001}. 

In this work, we consider a pursuit-evasion game, particularly, a two-person zero-sum game in an environment with obstacles. \cite{Chung2011} provides an extensive survey of pursuit-evasion games in mobile robotic applications. We only mention a few that are related to our problem. In \cite{LaValle1997}, the problem of planing motion for a robot to maintain visibility in a cluttered environment is first introduced. The authors propose algorithms for a predictable as well as unpredictable target. While numerical solution can be obtained for a predictable target, there is no guarantee for tracking when the target is unpredictable. For practical consideration, the problem is extended to a pursuer with limited sensing range in \cite{Murrieta-Cid2007}. In \cite{Bhattacharya2011}, the authors present sufficient conditions for an observer to track the target for infinite time. An optimal tracking strategy for the observer which considers the worst case scenario caused by the evader is obtained for certain initial configurations in \cite{Zou2016}. In this paper, we present a complete solution to the pursuer's optimal strategies for all initial configurations in a worst case scenario. 

Optimal control theory has been extensively applied in motion planning of mobile robots. Minimal length paths and time-optimal trajectories have be obtained for robots with different dynamic and kinematic configurations. For example, in \cite{Balkcom2002}, the time optimal trajectories for DDRs with bounded velocity are presented. The primitives of minimum wheel-rotation paths for DDRs are presented in \cite{Chitsaz2009}. Optimal paths and velocity profiles for car-like robots which minimize the energy consumption is presented in \cite{tokekar2014energy}. Many applications relevant to target-tracking/pursuit evasion include: vision-based time-optimal strategy for a differential-drive pursuer to capture an evader \cite{Ruiz2013, Jacobo2015}; optimal strategy for the pursuer to maintain a constant distance with the evader at minimal velocity \cite{Murrieta-Cid2011}; time-optimal primitives for a pursuit evasion game between an omni-directional agent and a DDR in which the two agents can switch roles. However, these works are limited to an obstacle-free environment. Finding optimal trajectories for a robot to a fixed point has been proved to be a challenging problem in the presence of obstacles \cite{Jacobs1993, Boissonnat1996, Lavalle2000}, not to mention the problem of finding the optimal tracking strategy of a pursuer in such environment. Therefore, research on pursuit evasion games currently are limited to environment with simple obstacles which can potentially shed light on solutions in more complicated environments. For example, in \cite{nikhil09iros}, the authors present optimal solutions to the lion and man game around a circular obstacle. Visibility-based pursuit-evasion game between two holonomic agents in an environment with obstacles are investigated in \cite{Bhattacharya2010, Bhattacharya2011, Bhattacharya2016}. In \cite{Bhattacharya2010}, the authors present local necessary and sufficient conditions for surveillance and escape near termination. In \cite{Bhattacharya2016}, the problem is investigated in an environment with a circular obstacle. 

In this paper, we consider a visibility-based pursuit-evasion game between two holonomic agents in a general environment with polygonal obstacles. The main contributions of this work are as follows: First, we present the complete solution to the pursuer's optimal tracking strategies in an environment with a semi-infinite obstacle. The solution is obtained by solving a optimal control problem with state inequality constraint. The optimal strategy in this simple environment provides fundamental understanding of the problem and a building block for solutions in general environment. Second, partitions of the workspace based on tracking time and strategies are presented. Third, we extend the solutions to general environments containing multiple obstacles to generate ``pursuit fields'' \cite{} to guide the pursuer's motion.

The rest of the paper is organized as follows. In Section \ref{sec:statement}, we present the formulation of the target-tracking problem. In Section \ref{sec:corner}, we formulate and solve the tracking problem around a corner as an optimal control problem with state inequality constraints. In Section \ref{sec:general}, we present the generation of pursuit field and simulation results. Finally, we conclude in Section \ref{sec:conclusion}.

\section{Problem Statement}
\label{sec:statement}

In this section, we present the formulation of the target-tracking problem studied in this paper. Consider a planar environment containing multiple polygonal obstacles. Two mobile agents, an observer and a target, are present on the plane. We assume they all have an omni-directional field-of-view (FOV) with infinite range. They are visible to each other when the line joining them (line of sight (LOS)) does not intersect with the obstacle. Assuming that the target is initially visible to one observer. The observer's objective is to maintain a LOS with the target for the maximum possible time, while the target's objective is to break LOS in the minimum time. So what should be the optimal strategies for both agents to achieve their goals?

To tackle this problem, we first solve it in smaller scale with less complexity: the target-tracking problem in a simple environment with one observer and one target. Then, we extends the result to a general environment with multiple polygonal obstacles. Since the two agents in the problem have exactly opposite objectives and they are mobile, a pursuit-evasion game arises. Hereafter, the observer will be called the {\it pursuer}, and the target will be called the {\it evader}. Both agents are assumed to be volumeless holonomic vehicles in this work as a starting point for our future analysis for non-holonomic vehicles.

\section{target-tracking Around a Corner}
\label{sec:corner}

In this section, we present the optimal strategy for a holonomic pursuer to track a holonomic evader around a semi-infinite corner. The proposed strategy is optimal in a sense that it provides a maximum guaranteed tracking time for the pursuer for any evader strategy. Let $\mathcal{C} = \mathbb{R}^2$ be the configuration space. Let $\mathcal{C}_{obs} \in \mathcal{C}$ be the obstacle region. Thus, the free configuration space is defined as $\mathcal{C}_{free} = \mathcal{C}\backslash \mathcal{C}_{obs}$. Define positions of the pursuer and evader as $p(t) = (x_p(t), y_p(t))$ and $e(t) = (x_e(t), y_e(t))$, respectively. Since the pursuer wants to maximize the tracking time, and the evader wants to minimize it, the problem is also a two-person zero-sum game where the payoff is the tracking time.

In Figure \ref{fig:corner}, the dashed region represents the semi-infinite corner whose vertex is located at the origin. The region opposite to the corner is called \textit{star region} denoted by $S^*$. If $p(t) \in S^*$, the pursuer can track the evader forever since the entire free space $\mathcal{C}_{free}$ is visible to it. The evader wins if 1) it reaches the origin $O$ before the pursuer reaches $S^*$, or 2) it breaks LOS with the pursuer on some line $l_f$ as shown in the figure.

\begin{figure}[thbp]
	\centering
	\includegraphics[width=0.65\linewidth]{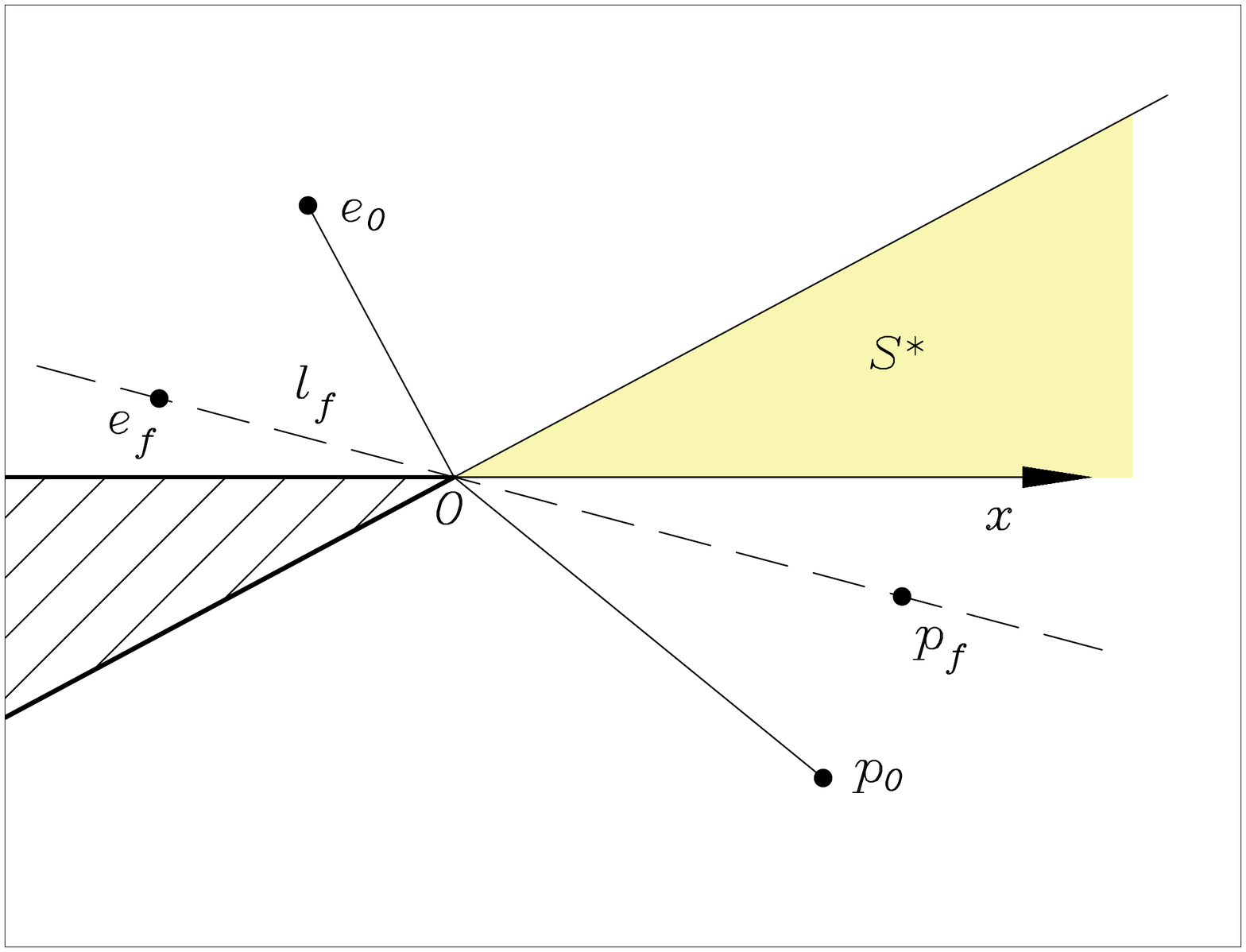}
	\caption{Tracking environment around a corner}
	\label{fig:corner}
	\vspace{-5 mm}
\end{figure}

\subsection{Interpretation as Optimal Control Problem}

We formulate the two agents target-tracking problem around a corner as an optimal control problem. First, we consider the evader's reachable set $R(t)$ which is a disc of increasing radius centered at the initial position of the evader. In order to maintain a LOS with the evader for the maximum possible time given any evader strategy, the pursuer must keep the entire disc in its FOV for the maximum possible time. The objective of the pursuer is to minimize the performance index
\begin{equation}
	J = \int_{0}^{t_f} -1 dt,
\end{equation}
where $t_f$ denotes the first time at which the evader, pursuer and origin are collinear, and the evader has a larger angular speed than the pursuer.

\begin{figure}[thbp]
	\centering
	\includegraphics[width=0.8\linewidth]{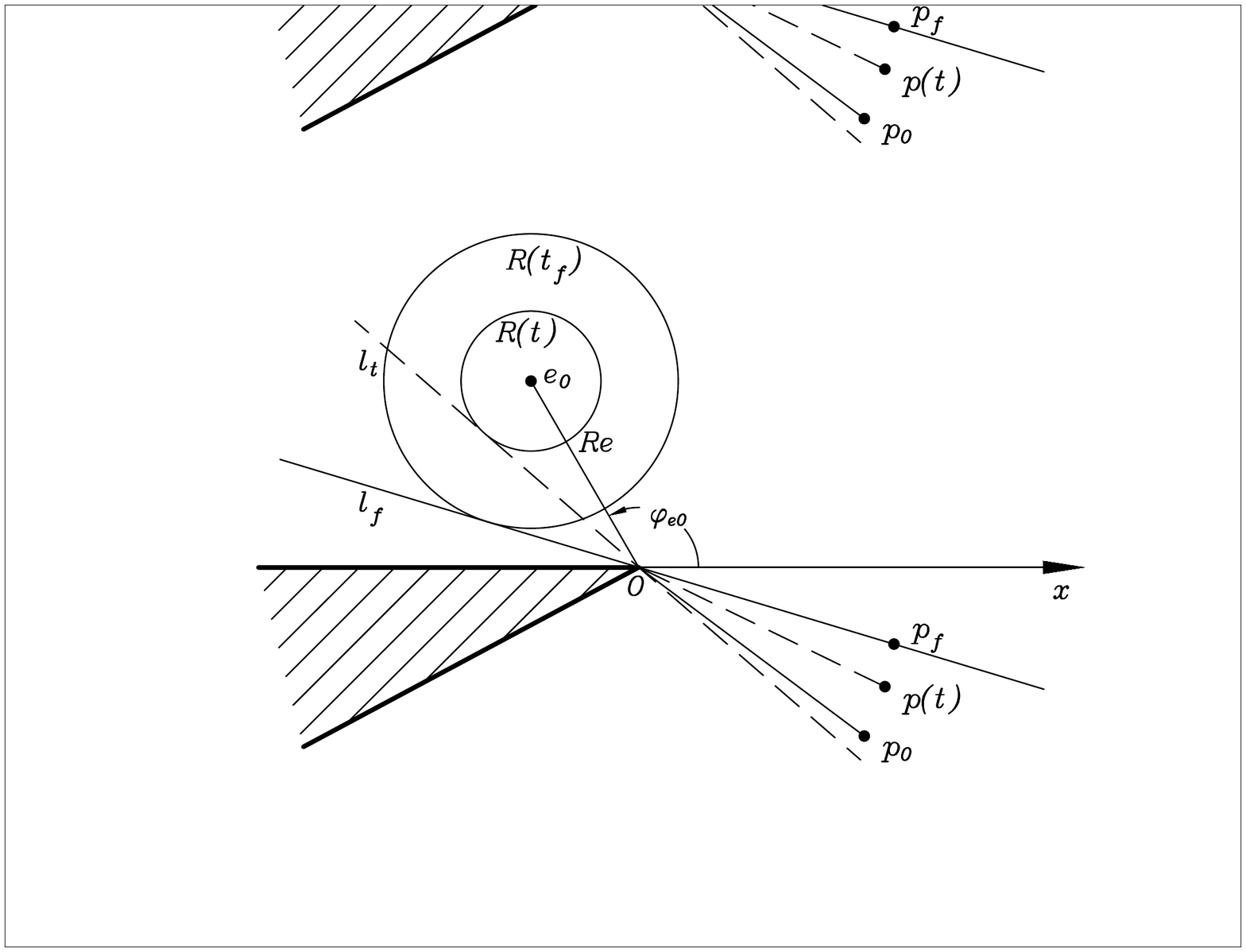}
	\caption{Tracking environment around a corner}
	\label{fig:reachable_set}
\end{figure}

Figure \ref{fig:reachable_set} illustrates the geometry of the problem. Let $v_p(t)$ and $v_e(t)$ denote the velocity of the pursuer and evader whose magnitudes are bounded by $\bar{v}_p$ and $\bar{v}_e$, respectively, and $a=\frac{\bar{v}_e}{\bar{v}_p}$ denote the ratio of their maximum speeds. The position of the pursuer, $p(t)$, is expressed in polar coordinates as $(\phi_{p}(t),r_{p}(t))$. Let $x_1(t)$ represent the angle that $l_t$ (tangent to $R(t)$ from O) subtends with the positive $x$-axis. Let $x_2(t)$ and $x_{3}(t)$ represent $r_{p}(t)$ and $\phi_{p}(t)$, respectively. From the dynamics of the players, we obtain the following state equations:

\begin{align}
	\label{eqn:states}
	\dot x_1 &= \frac{\bar{v}_e}{\sqrt{R_e^2 - (\bar{v}_e t)^2}} \\
	\dot x_2 &= -u_2 \sin u_1 \\
	\dot x_3 &= \frac{u_2}{x_2} \cos u_1,
\end{align}
where $R_e = r_e(0)$, $u_1(t) \in \mathbb{R}$ is the angle between $v_{p}(t)$ and the radial line from the origin to the pursuer's current position, and $u_2(t) \in [0, \bar{v}_p(t)]$ is the magnitude of $v_p(t)$. We assume the controls are piecewise continuous. Let $\phi_e(0)=x_1(0),\quad R_{p}=x_2(0)$ and $\phi_p(0)=x_3(0)$ represent the initial state conditions. Let the dynamics of the system be represented as $\dot{\mathbf{x}}(t) = f(\mathbf{x}, \mathbf{u}, t)$, where $\mathbf{x} = \{x_1, x_2, x_3\}$ and $\mathbf{u} = \{u_1, u_2\}$ represent the state and control vector, respectively. At $t_f$, the pursuer lies on $l_t$, and the angular speed of $l_t$ is greater than that of the pursuer. Therefore, we have the following terminal conditions:
\begin{align}
	g_1(\mathbf{x}(t_f), t_f) &= x_1(t_f) - x_3(t_f) - \pi = 0 \label{eqn:terminal_1}\\
	g_2(\mathbf{x}(t_f), t_f) &= \dot{x}_1(t_f) - \dot{x}_3(t_f) \nonumber\\
	&= \frac{\bar{v}_e}{\sqrt{R_e^2 - (\bar{v}_e t_f)^2}} - \frac{u_2(t_f)}{x_2(t_f)} \cos u_1(t_f) > 0 \label{eqn:terminal_2}
\end{align}
In order to maintain visibility before termination, the following state inequality constraint must be satisfied:
\begin{equation}
	\label{eqn:state_ineq}
	S(\mathbf{x}(t)) = \pi - x_1(t) + x_3(t) \geq 0
\end{equation}
According to \cite{Bryson1975applied}, we have a first-order state inequality constraint with the following derivative with respect to time:
\begin{equation}
	\label{eqn:1st_state}
	S^{(1)}(t) = \frac{u_2(t)}{x_2(t)} \cos u_1(t) - \frac{\bar{v}_e}{\sqrt{R_e^2 - (\bar{v}_e t)^2}} .
\end{equation}
The Hamiltonian for the constrained system is defined as follows: 
\begin{align}
	&H(\mathbf{x}(t), \mathbf{u}(t), \mathbf{p}(t), \mu(t), t) = -1 + \mathbf{p}^T f +\mu S^{(1)} \nonumber \\
	=& -1 + (p_1 - \mu) \frac{\bar{v}_e}{\sqrt{R_e^2 - (\bar{v}_e t)^2}} \nonumber\\
	&+ u_2(-p_2 \sin u_1 + \frac{p_3 + \mu}{x_2} \cos u_1) ,
	\label{eqn:ham}
\end{align}
where $\mathbf{p} = \{p_1, p_2, p_3\}$ are costates, and $\mu(t) \geq 0$ is the influence function defined as follows:
\begin{align}
	\left\{ {\begin{array}{*{20}{l}}
			{\mu  = 0, S>0 \; \mbox{the constraint boundary is inactive}}\\
			{\mu  > 0, S=0, S^{(1)} = 0\; \mbox{the constraint boundary is active}}.
		\end{array}} \right.
	\end{align}
	Since $H$ is linear in $u_2$, $u_2 \geq 0$, and $-p_2 \sin u_1 + \frac{p_3+\mu}{x_2} \cos u_1$ can always be minimized to a negative value by selecting appropriate value of $u_1$, $u_2$ should always be at its maximum value to minimize $H$. This leads to $u^*_2(t) = \bar{v}_p$. 
	
	\subsection{Inactive Boundary Constraints}
	\label{subsec:inactive}
	The point on the optimal trajectory when the boundary constraint becomes active for the first time is called the junction point. However, its existence cannot be determined a priori. Therefore, we first analyze the problem till the junction point (when $\mu = 0$) which leads to the following Hamiltonian function: 
	\begin{align}
		H &= -1 + \mathbf{p}^T f \nonumber \\
		&= -1 + p_1 \frac{\bar{v}_e}{\sqrt{R_e^2 - (\bar{v}_e t)^2}} - p_2 \bar{v}_p \sin u_1 + p_3 \frac{\bar{v}_p}{x_2} \cos u_1, \label{eqn:Hamiltonian_free}
	\end{align}
	where $u_2$ in (\ref{eqn:ham}) has been replaced with $\bar{v}_p$ in the above equation. 
	
	Since $u_1$ is unconstrained, the following optimality condition holds
	\begin{equation}
		\frac{\partial H}{\partial u_1}|_{u_1 = u_1^*(t)} = 0
		\implies \tan u^*_1 = -\frac{p^*_2}{p^*_3}x^*_2. \label{eqn:first_order_condition}
	\end{equation}
	
	Next, we show the pursuer's trajectory to minimize $J$ is a line segment. Converting the pursuer's position to Cartesian coordinates, we obtain
	\begin{equation}
		x_p(t) = r_p(t) \cos \phi_p(t) ,\;
		y_p(t) = r_p(t) \sin \phi_p(t).
	\end{equation}
	Taking the time derivative and plugging in the last two state equations in (\ref{eqn:states}) leads to the following dynamics for the pursuer in Cartesian coordinates:
	\begin{equation}
		\dot{x}_p =-\bar{v}_p \sin (u_1 + x_3),\; \dot{y}_p =  \bar{v}_p \cos(u_1 + x_3).
	\end{equation}
	Next, we show that $\dot{u}^*_1 + \dot{x}^*_3 = 0$ which implies that $u^*_1 + x^*_3$ is a constant ($\Rightarrow$ $(x^*_p(t), y^*_p(t))$ is a straight line). Differentiating both sides of (\ref{eqn:first_order_condition}) with respect to time leads to the following:
	\begin{equation}
		\dot{u}^*_1 =\bar{v}_p \frac{p_2^*}{p_3^*}\sin u^*_1 \cos^2u^*_1 - \frac{\bar{v}_p}{x^*_2}\cos^3 u^*_1.
	\end{equation} 
	Therefore, we obtain the following:
	\begin{equation*}
		\dot{u}^*_1 + \dot{x}^*_3= \frac{1}{2} \bar{v}_p \sin (2u^*_1) (\frac{p^*_2}{p_3^*}\cos u^*_1 + \frac{1}{x^*_2}\sin u^*_1) = 0.
	\end{equation*}
	The right hand side equality comes from (\ref{eqn:first_order_condition}). Therefore, $u^*_1 + x^*_3$ is a constant and $(x^*_p(t), y^*_p(t))$ lies on a straight line passing through point $(R_p \cos \phi_p(0), R_p \sin \phi_p(0))$ with slope $-\cot(u^*_1 + x^*_3)$. So the first stage of the pursuer's optimal strategy is always a line segment. 
	
	
	\subsection{Active Boundary Constraints}
	
	In this subsection, we will show when the boundary constraints become active, the pursuer's optimal trajectory has two stages. The first stage ends when the players activate the boundary constraints (\ref{eqn:state_ineq})) and (\ref{eqn:1st_state}), and the pursuer enters a second stage. In this case, the initial conditions at stage two (also the final conditions of stage 1) is such that the pursuer is on line $l_t$, and both agents have the same angular speed. The following proposition presents the pursuer's strategy in stage 2.
	
	\begin{proposition}
		\label{prop:angular_speed}
		The pursuer's optimal strategy when the boundary conditions are active is to stay on line $l_t$ and maintain the same angular speed as $l_t$ for the maximum possible time.
	\end{proposition}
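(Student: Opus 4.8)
The plan is to apply the necessary conditions for an optimal control problem with a first-order state inequality constraint (following \cite{Bryson1975applied}) on the arc where the boundary is active, and to show that these conditions force the pursuer onto the line $l_t$ with matched angular speed, sustained until the terminal manifold is reached. The starting observation is that the two phrases in the statement are exactly the active-constraint equations: ``stay on $l_t$'' is $S(\mathbf{x}(t)) = \pi - x_1 + x_3 = 0$ (the pursuer lies on $l_t$), while ``maintain the same angular speed as $l_t$'' is $S^{(1)}(t) = \dot x_3 - \dot x_1 = 0$, i.e. $\frac{u_2}{x_2}\cos u_1 = \frac{\bar{v}_e}{\sqrt{R_e^2-(\bar{v}_e t)^2}}$. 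So the content of the proposition is that, once the constraint is active, the optimal trajectory \emph{remains} on this boundary arc rather than re-entering the interior $S>0$, and does so for as long as it is physically feasible.

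First I would determine the control along the boundary arc. Because $S$ is a first-order constraint, $S^{(1)}$ contains the control explicitly, so the equation $S^{(1)}=0$ together with the already-established $u_2^*=\bar{v}_p$ fixes the tangential component of the pursuer's velocity through $|\cos u_1^*| = x_2\,\dot x_1/\bar{v}_p$. This is precisely the ``slide along the rotating line $l_t$'' motion, in which the tangential component exactly matches the angular rate $\dot x_1$ of $l_t$ and the residual radial component carries the pursuer along $l_t$. The only remaining freedom is the sign of the radial component (the branch of $u_1^*$); optimality selects the branch that moves the pursuer toward $O$, decreasing $x_2$ and thereby enlarging the angular-rate capacity $\bar{v}_p/x_2$ available at later times.

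Next I would verify that this arc is consistent with the minimum principle. Along the arc the costates evolve by $\dot{\mathbf{p}} = -\partial H/\partial \mathbf{x}$ with $H$ as in (\ref{eqn:ham}), and $u_1^*$ must simultaneously satisfy the stationarity condition $\tan u_1^* = -p_2 x_2/(p_3+\mu)$; equating this with the boundary value of $u_1^*$ yields the influence function $\mu(t)$, and the crucial step is to check the sign condition $\mu(t)\ge 0$ along the entire arc. I would also impose the junction (tangency) conditions at the entry point, continuous with the straight-line first stage established in Section~\ref{subsec:inactive}. Finally, the ``maximum possible time'' assertion follows by monotonicity: $\dot x_1 = \bar{v}_e/\sqrt{R_e^2-(\bar{v}_e t)^2}$ is strictly increasing and unbounded, so the boundary control remains admissible only while $\dot x_1 \le \bar{v}_p/x_2$; at the instant this fails, $\cos u_1^*$ would have to exceed $1$, the pursuer can no longer match the angular rate, $S^{(1)}$ is forced negative, and the terminal condition $g_2>0$ in (\ref{eqn:terminal_2}) is met. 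Any earlier excursion into $S>0$ would divert velocity from the radial descent toward $O$ into out-angling $l_t$, reducing the capacity $\bar{v}_p/x_2$ available later and thus bringing on this terminal instant sooner.

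The main obstacle I anticipate is the optimality argument in the third step rather than the kinematic derivation in the first two: rigorously establishing $\mu(t)\ge 0$ on the arc, correctly matching the junction conditions where the unconstrained and constrained arcs meet, and ruling out interior detours require the full state-constrained bookkeeping of \cite{Bryson1975applied}, where sign errors in the multiplier or in the costate jumps are easy to make. Showing that no trajectory with $S>0$ can outlast the boundary arc --- the precise meaning of ``for the maximum possible time'' --- is the delicate part and is where I would spend most of the effort.
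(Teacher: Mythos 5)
Your plan and the paper's proof diverge substantially, and the plan as written has a real gap: you defer exactly the steps that would constitute the proof. Verifying $\mu(t)\ge 0$ along the boundary arc, matching the junction conditions, and ``ruling out interior detours'' are not anticipated obstacles to be handled later --- they \emph{are} the proposition. An outline that says these are delicate and would absorb most of the effort does not establish the claim, and the Bryson--Ho machinery you invoke only gives necessary conditions in any case; it would not by itself show that no trajectory re-entering $S>0$ can outlast the boundary arc.

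The paper avoids all of this with a short feasibility argument that you do not identify, and which makes the multiplier bookkeeping unnecessary. At any instant $t'$ with $S(t')=\pi-x_1(t')+x_3(t')=0$ and visibility maintained on $[0,t']$, one has $S(t'-\delta t)=\int_{t'-\delta t}^{t'}(\dot x_1-\dot x_3)\,dt$; if the pursuer's angular speed exceeded that of $l_t$ near $t'$, this integral would be negative and the LOS would already have been broken at $t'-\delta t$ (the discontinuous-control case is handled with left limits). So on the boundary the pursuer can never out-rotate $l_t$; and if it rotates strictly slower, the terminal condition (\ref{eqn:terminal_2}) is met and the game ends. The only non-terminating option is $\dot x_3=\dot x_1$ exactly, i.e., stay on $l_t$ with matched angular speed --- there is no optimization to perform on the boundary arc at all. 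Your kinematic reading of $S^{(1)}=0$ (the control satisfying $\cos u_1^*=x_2\dot x_1/\bar v_p$, with the radial component chosen toward $O$ to enlarge the capacity $\bar v_p/x_2$) is consistent with the paper's later description of the stage-2 trajectory, but the optimality claim itself rests on the elementary observation above, not on the sign of the influence function.
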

	\begin{proof}
		We will prove the proposition by showing that when the pursuer is on line $l_t$, it is not possible for the pursuer to have an angular speed greater than $l_t$. In other words, when the conditions $\pi - x_1 + x_3 = 0$ is satisfied, we cannot have $\dot{x}_3 > \dot{x}_1$. Since the control variables are piecewise continuous, so is $\dot{x}_3$. At any time $t'$, when $\pi - x_1(t') + x_3(t') = 0$, there exists a $\delta t > 0$ such that
		\begin{align}
			\pi& - x_1(t'-\delta t) + \int_{t'-\delta t}^{t'} \dot{x}_1(t') dt + x_3(t'-\delta t) \nonumber \\ +& \int_{t'-\delta t}^{t'} \dot{x}_3(t') dt = 0 \\
			\pi& - x_1(t'-\delta t) + x_3(t'-\delta t) = \int_{t'-\delta t}^{t'} (\dot{x}_1(t') - \dot{x}_3(t'))dt  \label{eqn:prop1_1}\\
			\pi& - x_1(t'-\delta t) + x_3(t'-\delta t) \nonumber\\
			=& \int_{t'^- -\delta t}^{t'^-}(\dot{x}_1(t'^-) - \dot{x}_3(t'^-))dt
			\label{eqn:prop1_2}
		\end{align}
		
		From (\ref{eqn:prop1_1}), $\dot{x}_1(t') - \dot{x}_3(t')>0\Leftrightarrow \pi - x_1(t'-\delta t) + x_3(t'-\delta t) < 0$, if $\dot{x}_3$ is continuous at $t'$. This implies that LOS is broken at $t'-\delta t$. If $\dot{x}_3$ is discontinuous at $t'$, we can conclude the same from (\ref{eqn:prop1_2}) since $[t'^-,t']$ is of measure zero. Since the game terminates when the pursuer lies on $l_t$, and has a lower angular speed, the proposition holds.
	\end{proof}

	Denote the durations of the two stages of the pursuer's optimal strategy with $T_1$ and $T_2$. We present the following proposition.
	\begin{figure}[thbp]
		\centering
		\includegraphics[width=0.7\linewidth]{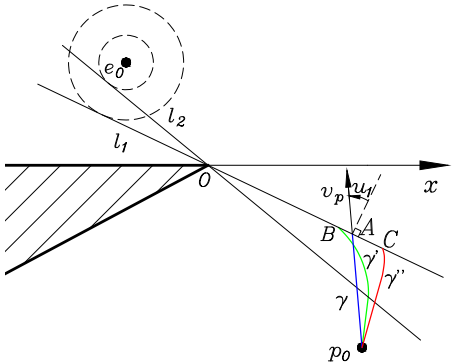}
		\caption{Trajectories of stage 1 of the pursuer.}
		\label{fig:maxT1}
	\end{figure}
	\begin{proposition}
		$\arg\max_{\bf{u}}(T_{1}+T_2)=\arg\max_{\bf{u}}T_1\quad\forall t\leq T_{1}$.
	\end{proposition}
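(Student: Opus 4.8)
The plan is to exploit the fact that the evader's reachable set \(R(t)\), and hence the tangent line \(l_t\) together with its angle \(x_1(t)\) and angular speed \(\dot{x}_1(t)=\bar{v}_e/\sqrt{R_e^2-(\bar{v}_e t)^2}=:\omega(t)\), evolve as functions of time alone, \emph{independently} of the pursuer's control \(\mathbf{u}\). This decouples the motion of the constraint boundary from the pursuer and lets me read the total tracking time \(t_f=T_1+T_2\) purely as the first instant at which the pursuer can no longer keep its angular coordinate \(x_3\) at or above the deterministically moving boundary \(x_1(t)-\pi\). By Proposition \ref{prop:angular_speed}, once the pursuer is pinned to \(l_t\) (stage 2) it merely matches \(\dot{x}_3=\dot{x}_1\) and is eventually shed; thus all of the pursuer's genuine freedom is spent in stage 1, and the proposition reduces to showing that using this freedom to delay the junction as long as possible is never suboptimal for \(t_f\).

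First I would record the stage-2 reduction. Combining \(u_2^*=\bar{v}_p\) with the active-constraint requirement \(\dot{x}_3=\dot{x}_1=\omega(t)\) forces \(\bar{v}_p\cos u_1=x_2\,\omega(t)\), so that, using the state dynamics (\ref{eqn:states}), the pursuer's radius obeys \(\dot{x}_2=-\sqrt{\bar{v}_p^2-x_2^2\,\omega(t)^2}\) when it contracts its radius as fast as the matching condition permits, and the game ends the instant \(x_2\) reaches the critical radius \(\bar{v}_p/\omega(t)\), where \(\cos u_1=1\) and no further inward motion is possible. This exhibits \(T_2\) as the exit time of a scalar ODE whose driving term \(\omega(t)\) is fixed in advance, determined by the junction data \((T_1,x_2(T_1))\) and by the monotonically \emph{decreasing} critical radius \(\bar{v}_p/\omega(t)\).

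Next I would set up a comparison argument between the maximizer of \(T_1\), call it \(\mathbf{u}^{\dagger}\) with junction time \(T_1^{\dagger}\), and any competitor \(\mathbf{u}\) with junction time \(T_1\le T_1^{\dagger}\). At time \(T_1\) the competitor satisfies \(S=\pi-x_1+x_3=0\) (on the boundary), whereas \(\mathbf{u}^{\dagger}\) still has \(S>0\) (strictly interior, by maximality of \(T_1^{\dagger}\)). I would then argue that the maximal residual tracking time from a strictly interior state at time \(T_1\) is at least that from a boundary state at the same time, because the viable ``tracking region'' shrinks monotonically in time, driven by the increasing \(\omega(t)\), while a strictly interior point still possesses a nonnegative margin to expend. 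This yields \(t_f^{\dagger}\ge t_f\), so that maximizing \(T_1\) maximizes \(t_f\); and since the stage-1 control achieving \(T_1^{\dagger}\) is the straight-line segment that delays the junction, the two optimizers coincide for all \(t\le T_1\), which is exactly the assertion.

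The main obstacle I anticipate is this last monotone comparison: proving rigorously that a strictly interior configuration at time \(T_1\) dominates a boundary configuration in residual tracking time. The difficulty is that ``interiority'' is measured by the scalar slack \(S\), whereas the pursuer's future capability also depends on its radius \(x_2\) through the cap \(\bar{v}_p/x_2\) on \(\dot{x}_3\), so a naive componentwise ordering of \((x_2,x_3)\) need not be preserved by the coupled flow. I expect to resolve this by showing the value function \(t_f(\cdot)\) is monotone along the constraint-normal direction and by using the fact that \(\omega(t)\) is prescribed in advance to propagate the comparison forward in time; this step, rather than the routine ODE bookkeeping of the first two steps, is where the real content of the proof lies.
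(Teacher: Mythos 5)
Your skeleton is right — stage 2 is a deterministic exit-time problem driven by $\omega(t)=\bar{v}_e/\sqrt{R_e^2-(\bar{v}_e t)^2}$, its duration is governed by the junction data $(T_1, x_2(T_1))$, and smaller radius on $l_t$ is better — but the step you yourself flag as ``where the real content lies'' is a genuine gap, and the tool you propose for closing it does not work. The claim that a strictly interior state at time $T_1$ dominates a boundary state at the same time is false as a general principle: residual tracking time depends on $x_2$ as well as on the slack $S$, and an interior pursuer far from the origin (large $x_2$, hence small angular-speed cap $\bar{v}_p/x_2$) is shed sooner than a boundary pursuer near the origin. Monotonicity of the value function ``along the constraint-normal direction'' cannot rescue this, because the two states you are comparing (the $T_1$-maximizer's interior state and the competitor's boundary state at the competitor's junction time) are not ordered along that direction alone; they differ in $x_2$ in a way you have not controlled.

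What the paper supplies, and what your proposal is missing, is the geometric fact that the $T_1$-maximizer wins on \emph{both} coordinates simultaneously. First, the $T_1$-maximizing straight segment $\gamma$ must arrive at $l_{T_1}$ with $u_1\in[0,\tfrac{\pi}{2})$ (otherwise its angular speed would be decreasing while that of $l_t$ increases, forcing a visibility violation just before $T_1$), so $\gamma$ meets $l_{T_1}$ at a point $A$ with $\angle p_0AO\geq\tfrac{\pi}{2}$. Consequently every point $B$ of the segment $OA$ other than $A$ satisfies $|\overline{p_0B}|>|\overline{p_0A}|=|\gamma|$; since all admissible trajectories have the same length $\bar{v}_pT_1$ by time $T_1$, no competitor can be on $l_{T_1}$ between $O$ and $A$ at time $T_1$ — any competitor on $l_{T_1}$ at that instant sits farther from the origin than $A$. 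Combining this with your own stage-2 reduction (the game ends when $x_2$ meets the shrinking critical radius $\bar{v}_p/\omega(t)$, so the trajectory with smaller $x_2$ on the common line $l_{T_1}$ survives longer) closes the argument. Without this comparison of positions on the common line $l_{T_1}$, your proof does not go through.
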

	\begin{proof}
		According to section \ref{subsec:inactive}, the pursuer's optimal strategy to maximize tracking time before boundary constraints are active lies on a straight line which also holds for its strategy to maximize $T_1$. We will show it is also the optimal strategy for $t < T_1$ to maximize the total tracking time $T_1 + T_2$.		
		 
		We first present a property regarding the pursuer's strategy at the end of stage 1. In Figure \ref{fig:maxT1}, the blue line segment represents the pursuer's trajectory which maximizes $T_1$ in stage 1. It is denoted by $\gamma$ and ends on line $l_t$ at $A$. In this figure, $u_1 \in (0, \frac{\pi}{2})$ is the angle from the pursuer's tangential direction to its velocity direction. By the definition of $u_1$ in the problem, we know $u_1 \in [-\frac{\pi}{2}, \frac{\pi}{2}]$. If $u_1 < 0$ at $T_1$, then the pursuer's angular speed is decreasing at $T_1$. However, the angular speed of $l_t$ has been increasing in stage 1. To achieve same angular speeds at $T_1$, there must be some $\delta t > 0$, such that the pursuer's angular speed is faster than that of $l_t$ at $T_1 - \delta t$ which means visibility is broken at $T_1 - \delta t$. 
		Therefore, at the end of trajectory $\gamma$, $u_1 \in [0, \frac{\pi}{2})$. So in Figure \ref{fig:maxT1}, $\angle p_0AO \geq \frac{\pi}{2}$.
		
		The green curve in Figure \ref{fig:maxT1} denoted by $\gamma'$ represents another trajectory which finishes stage 1 with a shorter time on line $l_2$ and reaches $l_1$ at $B$ in the middle of stage 2. $B$ lies on segment $OA$. Comparing the lengths of $\gamma$ and $\gamma'$, we obtain $|\gamma'| > |\overline{p_0B}| > |\gamma|$. Since $u_2 = \bar{v}_p$ on $\gamma$, it is not possible for $\gamma'$ to reach $l_1$. Therefore, trajectory to $l_1$ other than $\gamma$ cannot reach segment $OA$ and must lie on the right of $A$ like the red trajectory $\gamma''$ in the figure. 
		
		After $\gamma$ and $\gamma''$ reach $l_1$ at $T_1$, $\gamma$ enters its stage 2 and $\gamma''$ continues in its stage 2. Both of them will maintain the same angular speed with $l_t$. Let $u_1^{\gamma}$ and $u_1^{\gamma''}$ be the directional control variables of $\gamma$ and $\gamma''$. Since $|OA| < |OC|$, by the time $u_1^{\gamma''} = 0$ and tracking ends for $\gamma''$, $u_1^{\gamma} > 0$ and tracking has not terminated for $\gamma$. Therefore, trajectory $\gamma$ is the one that maximizes the total tracking time.
	\end{proof}
	
	\subsection{Optimal trajectory of the pursuer}
	
	Based on the analysis in the previous subsections, we can conclude that the optimal trajectories for the pursuer can be divided into two classes. In the first class, the pursuer follows a straight line trajectory. In the second class, the pursuer has two stages. In the first stage, the pursuer follows a straight line trajectory till it lies on $l_t$. The second stage initiates once the pursuer lies on $l_t$, and thereafter, the pursuer's strategy is to maintain the same angular speed as $l_t$ in order to stay on it for the maximum possible time. In this subsection, we describe the pursuer's optimal trajectories from the optimal strategy.
	
	
	\vspace{0.1in}
	\noindent
	\underline{\it{Class 1}}
	
	\noindent
	We first solve the problem of finding the pursuer's trajectory to maximize the time of termination (\ref{eqn:terminal_1}) without considering any other terminal or state constraints. Thus, the Hamiltonian is defined as in (\ref{eqn:Hamiltonian_free}). With (\ref{eqn:terminal_1}) being the only terminal condition, we obtain the transversality condition that $p_2^*(t_f^*) = \alpha \frac{\partial g_1}{\partial x_2}(\mathbf{x}^*(t_f^*), t_f^*) = 0$. Combining this with (\ref{eqn:first_order_condition}) leads to $u_1^*(t_f) = 0$ which implies that the pursuer's velocity is perpendicular to the terminal line, and the optimal trajectory of the pursuer is a straight line perpendicular to the terminal line as illustrated in Figure \ref{fig:optimal}. We have shown in \cite{Zou2016} that this trajectory provides the global maximum time.
	\begin{figure}[thbp]
		\centering
		\includegraphics[width=0.6\linewidth]{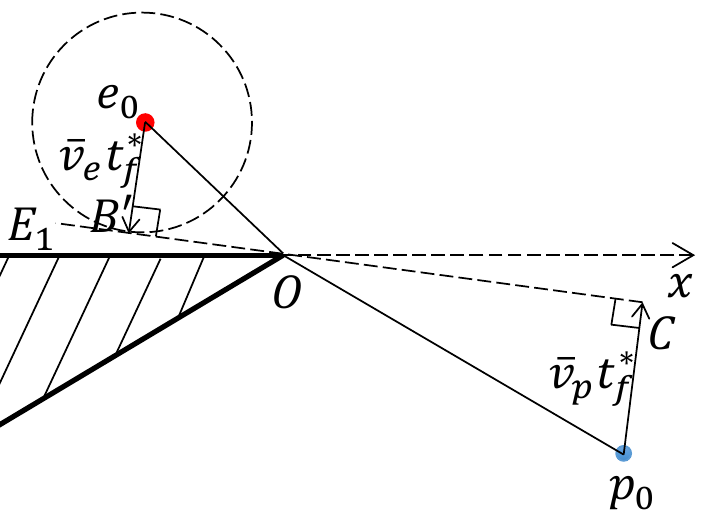}
		\caption{Pursuer's optimal trajectory of Problem 1}
		\label{fig:optimal}
		\vspace{-5 mm}
	\end{figure}
	
	Based on the geometry of the solution, it is trivial that $\frac{\bar{v}_e t_f^*}{r_e(t_f^*)} > \frac{\bar{v}_p t_f^*}{r_p(t_f^*)}$. Thus, we have $\frac{\bar{v}_e}{r_e(t_f^*)} > \frac{\bar{v}_p}{r_p(t_f^*)}$ which implies that line $l_t$ has an angular speed greater than the pursuer at termination. Therefore, solution of this problem satisfies the terminal constraint (\ref{eqn:terminal_2}). Then we check if the state inequality constraint (\ref{eqn:state_ineq}) for visibility is maintained for this trajectory. If the constraint is inactive till termination, we have a Class 1 solution, and the pursuer's optimal strategy is to move perpendicular to the terminal line. Otherwise, we need to check the Class 2 solution. 
	
	\vspace{0.1in}
	\noindent
	\underline{\it{Class 2}}
	
	\noindent

	Given the initial positions of the pursuer and evader, we cannot determine whether the solution lies in Class 1 or Class 2 a priori. If the Class 1 solution cannot maintain visibility, we need to solve for the solutions to the two stages of the Class 2 solution.	
		
	Next, we solve the problem of finding the pursuer's trajectory to maximize $T_1$ for stage 1. 
	Figure \ref{fig:findT1} shows the trajectory of the pursuer in stage 1 which ends on line $l_t$ at $p_T$. Our goal is to find the tracking time $T$ corresponding to the trajectory. Angles to be used are denoted in Figure \ref{fig:findT1}. 
	\begin{align}
		\sin \alpha_T = \frac{\bar{v}_e T}{R_e} \label{eqn:sin_a}\\
		\beta_T = \alpha_T - \pi + \Delta \phi_0 \label{eqn:beta_T}
	\end{align}
	By the law of sines, we obtain:
	\begin{equation}
		\label{eqn:sine_law}
		\frac{|Op_T|}{\sin(\pi-\beta_T-\gamma_T)} = \frac{\bar{v}_p T}{\sin \beta_T} = \frac{R_p}{\sin\gamma_T}
	\end{equation}
	Applying the fact that the pursuer and $l_t$ have the same angular speed at $T$, we obtain the following equation:
	\begin{equation}
		\label{eqn:ang_speed}
		\frac{\bar{v}_e}{\sqrt{R_e^2 - (\bar{v}_e t)^2}} = \frac{\bar{v}_p \sin(\pi - \gamma_T)}{|Op_T|}
	\end{equation} 
	Combining (\ref{eqn:sin_a}), (\ref{eqn:beta_T}), (\ref{eqn:sine_law}) and (\ref{eqn:ang_speed}) with some manipulation, we will be able to obtain an equation with only one unknown variable $T$. Due to trigonometric equations, the resulting equation of $T$ is transcendental in nature and very complicated. We will not present the analytic form of it here. But the unknown variable $T$ can be solved easily from above equations using any numerical solver. Note that there may be more than one $T$ that satisfy the final equation. In this case, we need to check the visibility along the pursuer's trajectory corresponding to each $T$. The maximum $T$ that maintains visibility of evader's reachable disk and its corresponding trajectory is the solution. 
	\begin{figure}[thbp]
		\centering
		\includegraphics[width=0.7\linewidth]{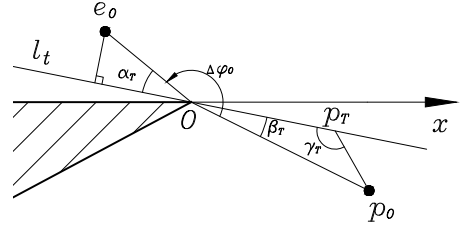}
		\caption{Geometry of stage 1 of Case 2.}
		\label{fig:findT1}
		\vspace{-5 mm}
	\end{figure}
	
	Finally, we solve the problem of finding the pursuer's strategy to maximize $T_2$ during stage 2. Based on the previous discussion, we can conclude that the pursuer will approach the corner while maintain the same angular speed with $l_t$ to stay on it during stage 2. By the end of this stage, the pursuer's velocity is aligned with its tangential direction, since that is how the pursuer achieves its maximum angular speed at the last moment. Since the motion of $l_t$ is just a function of time, we can determine the corresponding strategy of the pursuer in this stage.
	
	Assuming stage 1 of Class 2 terminates at $T_1^*$, for $t > T_1^*$, the pursuer's motion is described as follows:
	\begin{equation}
		\dot{x}_3 = \frac{\bar{v}_e}{\sqrt{R_e^2 - (\bar{v}_e t)^2}},
	\end{equation}
	with $u_2(t) = \bar{v}_p$ and $x_3(T_1^*) = x_1(T_1^*) - \pi$. Figure \ref{fig:case2path} illustrates an example trajectory of the pursuer of Case 2. The red curve represents the trajectory of the point of tangency of the evader's reachable disk. The blue curve represents the trajectory of the pursuer that maximizes its tracking time. Stage 1 ends on line $l_{T_1^*}$ at time $T_1^*$. Line $l_f$ is the terminal line of tracking. The blue curve between $l_{T_1^*}$ and $l_f$ is the pursuer's trajectory in stage 2 which maintains the same angular velocity as the evader.
	
	\begin{figure}[thbp]
		\centering
		\includegraphics[width=0.8\linewidth]{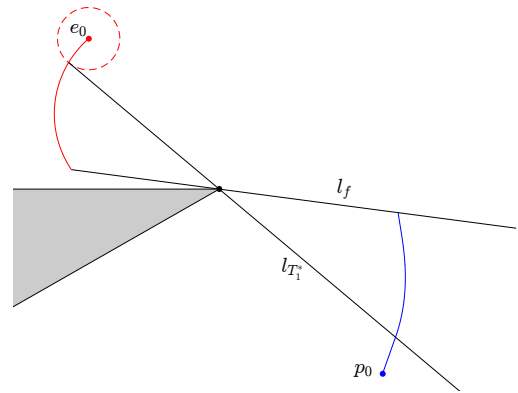}
		\caption{Example path of stage 2 of Case 2}
		\label{fig:case2path}
		\vspace{-4 mm}
	\end{figure}
	
	\subsection{Evader-based partition}
	
	In this subsection, we will present a partition of the workspace based on results in the previous subsections such that given an initial position of the evader, the partition clearly presents the regions from which the pursuer can track the evader for a specific time.
	
	To obtain such partition, we first need to obtain maximum tracking time and the corresponding strategy for any initial positions of the pursuer and evader. The optimal control problems formulated in the previous subsections do not cover all the possible scenarios for the initial positions which can be observed from Figures \ref{fig:optimal} and \ref{fig:findT1}.  Notice that in both figures, the geometry requires both agents to be located at some specific positions. Namely, they must lie on two different halves of the terminal line divided by the origin as in Figures \ref{fig:optimal} and \ref{fig:findT1}. Otherwise, the evader can only escape if it can reach the origin before the pursuer reaches the star region which leads to their optimal strategy: evader's optimal strategy is to move to the origin, and pursuer's strategy is to move to the star region. The first player to reach its destination determines the total tracking time. However, we cannot tell whether the geometry in Figures \ref{fig:optimal} and \ref{fig:findT1} can be constructed. Therefore, we have to numerically check for the existence of the solutions to Class 1 and 2. If no solution exists for both of them, it indicates the evader cannot break line of sight with the pursuer by reaching the opposite half of the terminal line from the pursuer, and the evader's optimal strategy is to move to the origin, and the pursuer's optimal strategy is to move to the star region.
	\begin{figure}[bhp]
		\centering
		\includegraphics[width=0.7\linewidth]{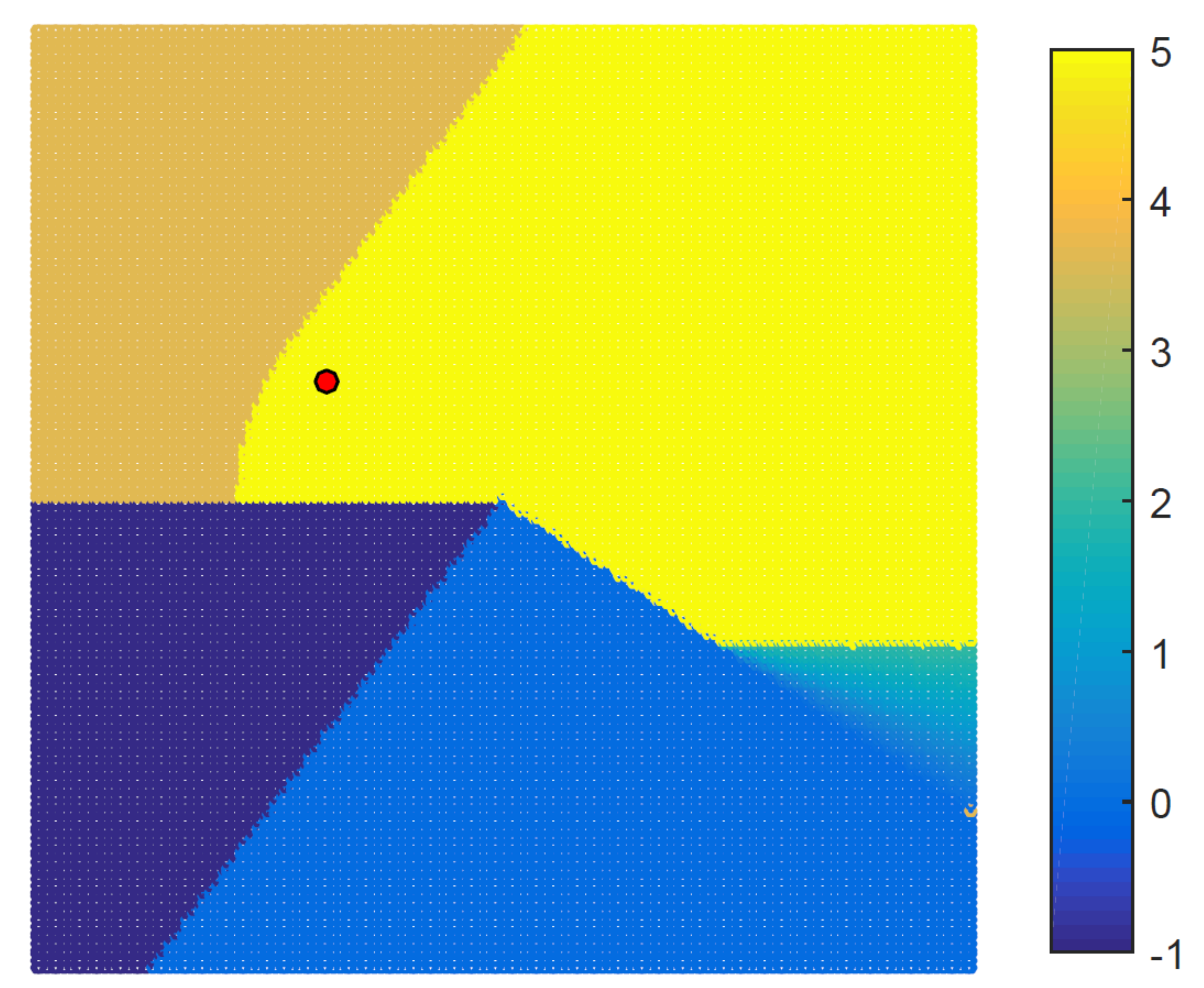}
		\caption{Evader-based partition. Red dot: Evader}
		\label{fig:e_par_holo}
		\vspace{-3 mm}
	\end{figure}
	\begin{figure}[thbp]
		\centering
		\includegraphics[width=0.6\linewidth]{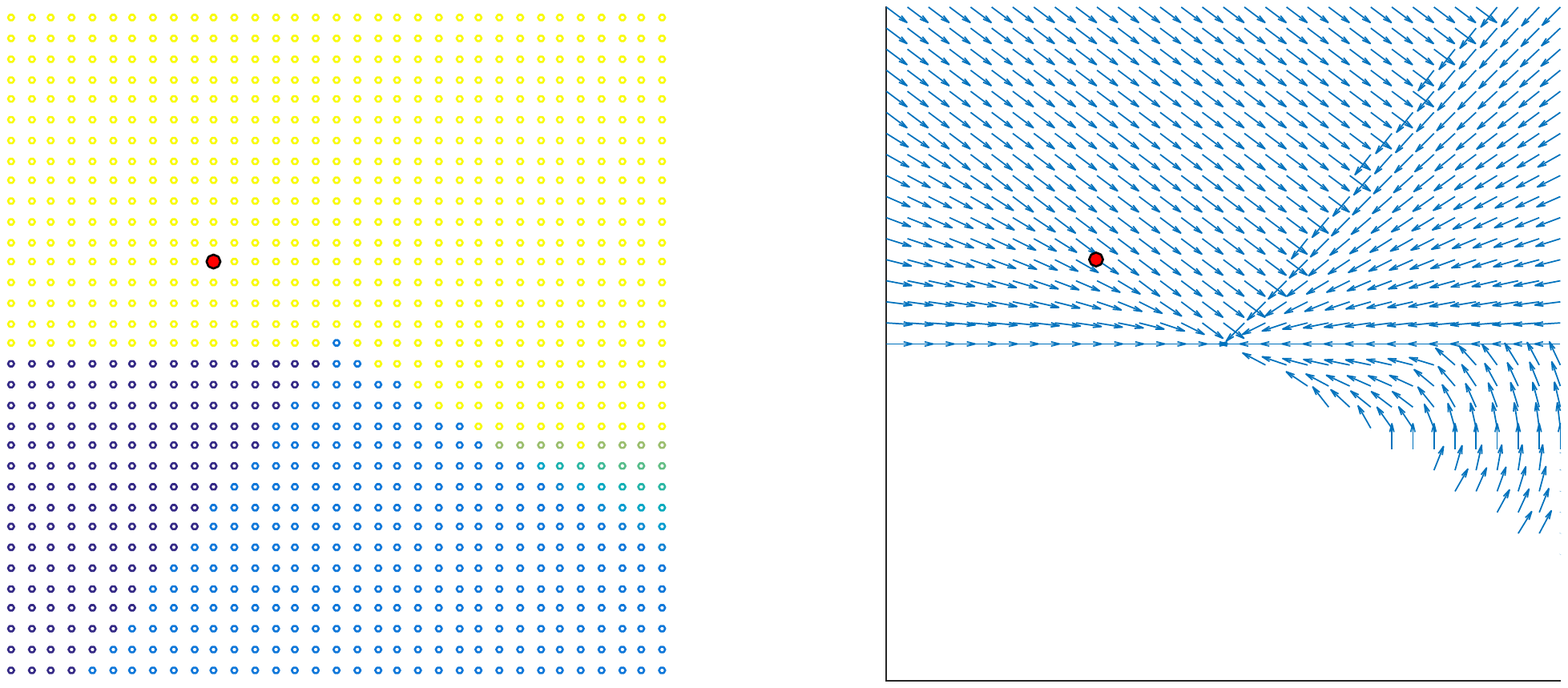}
		\caption{Evader-based vector field. Red dot: Evader}
		\label{fig:vector}
	\end{figure}
	Based on the above analysis, we compute the tracking time and strategy for all possible initial pursuer positions in the workspace given an initial evader position represented as a red dot in Figure \ref{fig:e_par_holo}. In this figure, the dark blue region represents the obstacle, and light yellow region represents pursuer win region for infinite tracking time. The optimal control for the pursuer at any given position inside the partitions generates a vector field as shown in Figure \ref{fig:vector}. The vector field originates from the pursuer's optimal trajectories from all positions at the beginning of the tracking. Note that we assign a vector towards the origin to the pursuer when it lies in the star region, though it has numerous choices to stay in the star region. Figure \ref{fig:strategy_partition} presents the evader based partition on the pursuer's optimal strategies for the maximum possible time. 

	\begin{figure}[thbp]
		\centering
		\includegraphics[width=0.9\linewidth]{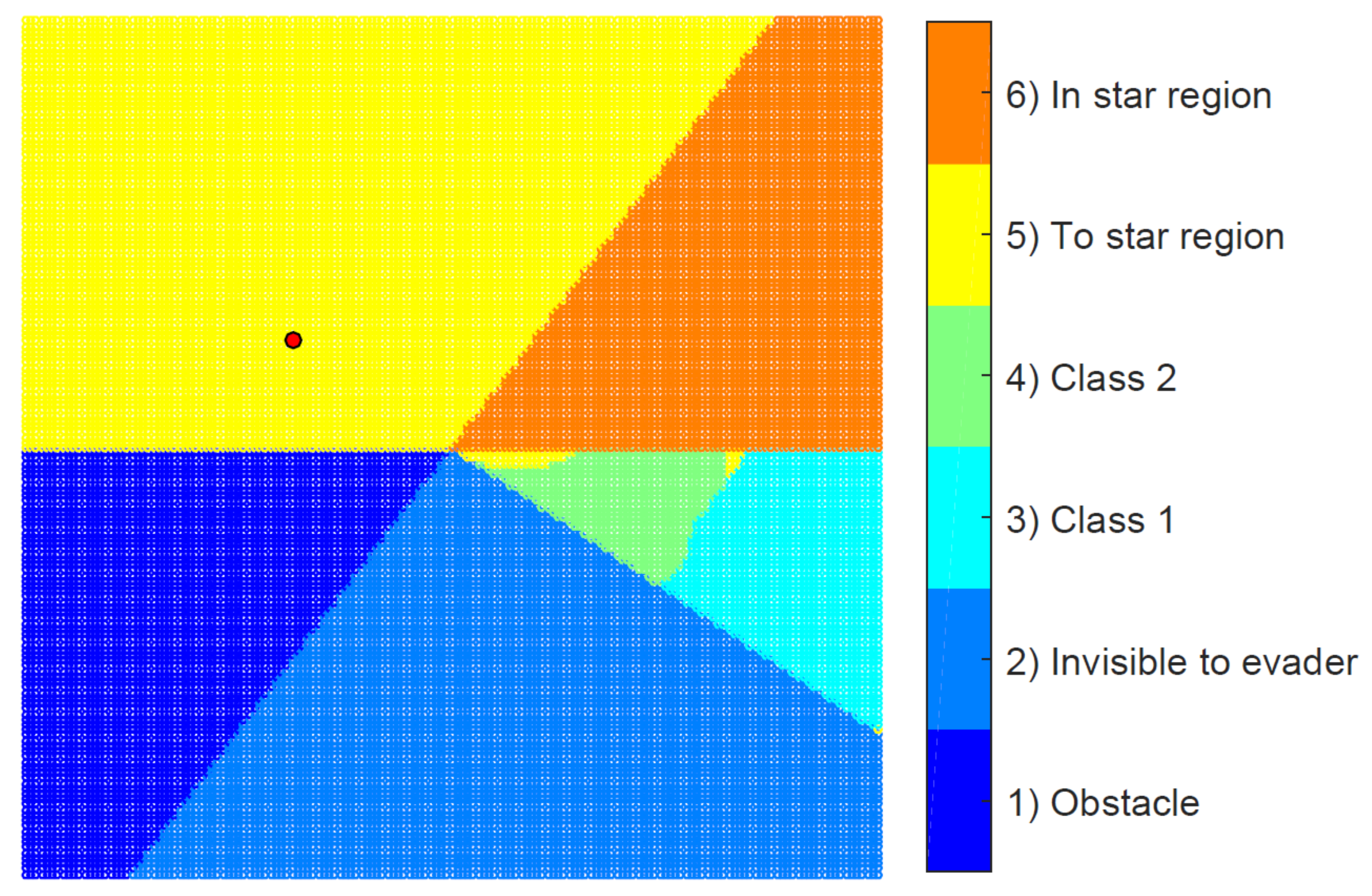}
		\caption{Evader-based pursuer strategy partition. The colors in the color bar denote: 1) pursuer can take any strategy as long as it stays in the star region; 2) pursuer takes the shortest path to star region; 3) pursuer follows the solution of Class 1; 4) pursuer follows the solution of Class 2; 5) pursuer is initially not visible from this point; 6) obstacle.}
		\label{fig:strategy_partition}
	\end{figure}

	\subsection{Pursuer-based partition}
	
	Similar to the evader-based partition, we can construct a pursuer-based partition. Given the initial position of the pursuer, we present the regions from which the evader can escape in a specific time. An analysis similar to the previous section follows in order to construct the partition. Figure \ref{fig:p_par} shows the tracking time for several initial positions of the evader for a given pursuer position.
	
	\begin{figure}[thbp]
		\centering
		\includegraphics[width=0.7\linewidth]{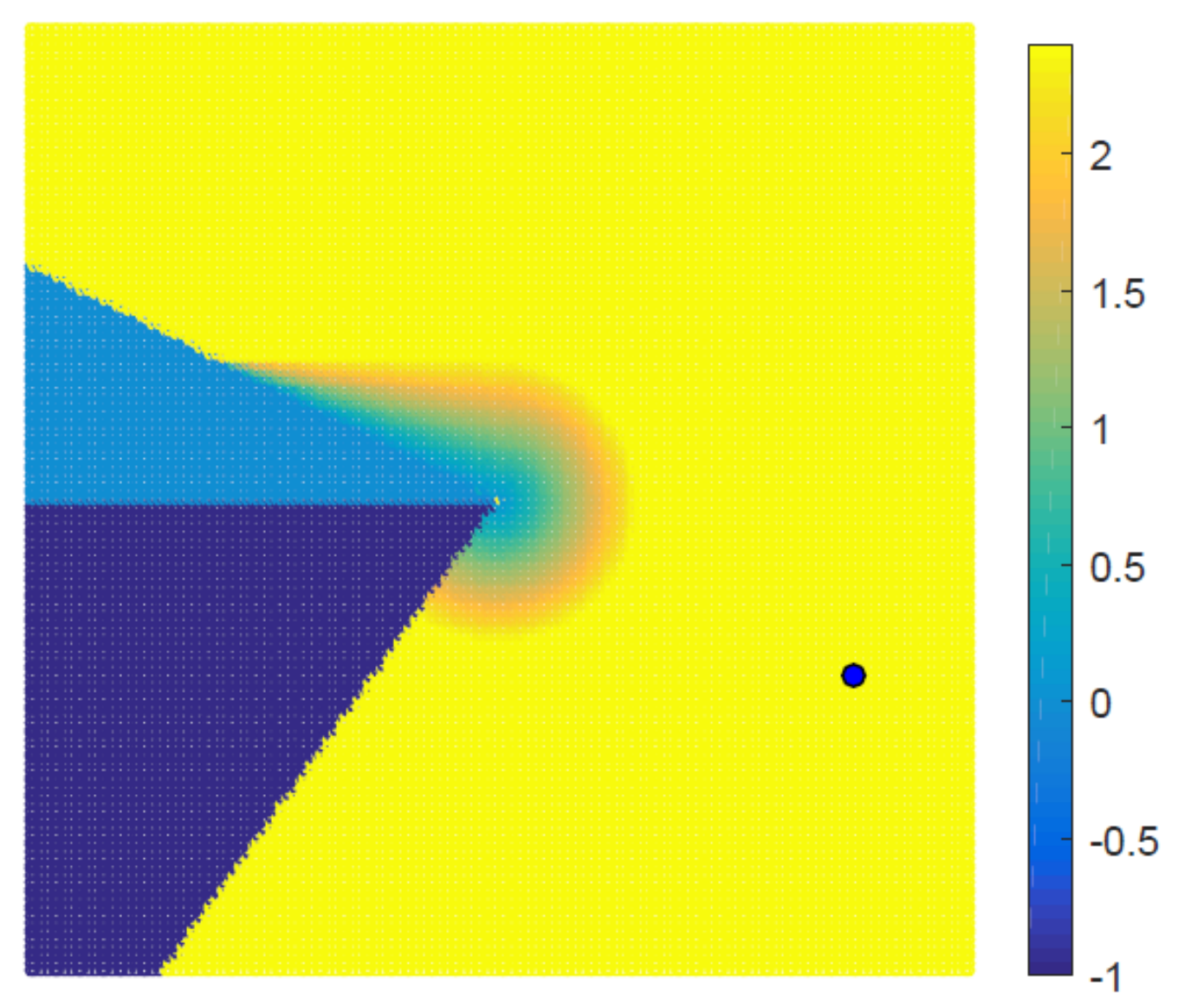}
		\caption{Pursuer-based partition. Blue dot: Pursuer. Dark blue represents the obstacle. Yellow represents the region for evader such that it can not escape in finite time.}
		\label{fig:p_par}
		\vspace{-5 mm}
	\end{figure}

\section{Target Tracking in General Environment}
\label{sec:general}

In this section, we extend the problem of target tracking in a general environment. The pursuer's tracking strategies obtained in Section \ref{sec:corner} provides the maximum possible tracking time in a simple environment with only one corner. Since the solution considers the worst case scenario, the pursuer's tracking time is guaranteed for any evader strategy. However, since the evader is completely unpredictable, uncertainty arises in both agents' strategies in the presence of multiple corners. In order to take advantage of the optimality of the strategy, we will introduce a pursuit field \cite{Mengzhe2014} in this section to guide the pursuer. The fundamental idea of this section is to design tracking strategies that maintain the optimality to the greatest extent.

We first introduce some notations and the concept of pursuit field. Let $\mathcal{V}$ be the set of all corners of polygonal obstacles in the environment. For a pursuer and an evader located at $p(t)$ and $e(t)$, $\mathcal{V}_{p(t)}$ and $\mathcal{V}_{e(t)}$ represents the set of corners that are visible to the pursuer and evader, respectively. Let $\mathcal{V}_{ref}$ denote the set of reflex vertices(corners) whose interior angles are greater than $\pi$. Since the evader cannot escape from a reflex vertex, the pursuer is only interested in the following set of vertices $\mathcal{V}^* = (\mathcal{V}\backslash \mathcal{V}_{ref}) \cap \mathcal{V}_{p(t)} \cap \mathcal{V}_{e(t)}$. 
For each vertex $\mathcal{V}^*$, a vector field as in Figure \ref{fig:vector} can be obtained in $\mathcal{C}_{free}$ given the evader's position. The pursuer's moving direction at a point is dictated by the vector at this point, and the corresponding tracking time can be computed. Denote the vector with $v_i$ and tracking time with $T_i$ for the $i$th vertex. Thus, we define a weight function $w_i(T_i)$ for each vertex which is a function of $T_i$. The weighted sum is defined as follows
\begin{equation}
v_{sum} = \sum_{\mathcal{V}^*} w_i(T_i) v_i.
\end{equation}
Normalize $v_{sum}$ to a unit vector as a guiding vector for the pursuer in the general environment. The vector field formed by $v_{sum}$ is called the \textit{pursuit field}. The weight function can be designed in different ways. We propose two potential weight functions and pursuer's corresponding strategies as follows.

\subsection{Distance-based Strategy}
\label{subsec:distance}

Consider an environment with several corners located far away from each other. If both agents are very close to one corner compared to the others, then it is reasonable for both of them to plan their strategies according to this corner. Therefore, we can consider the optimal solution from Section \ref{sec:corner} still holds locally around this corner. Following from this idea, we present the weight to be a function of the pursuer's and evader's distance to a corner. For each corner $i \in \mathcal{V}^*$, define $d_i$ as the sum of the pursuer's and evader's Euclidean distance to the corner. Then the weight function is defined as
\begin{equation}
	{w_j} = \left\{ {\begin{array}{*{20}{c}}
		{1,\;j = \argmin_{i}  ({d_i})}\\
		{0,j \ne \argmin_{i} ({d_i})}
		\end{array}} \right.. \label{eqn:distance}
\end{equation}
So the pursuit field is dominated by the vector field generated around the corner with the shortest total distance to the pursuer and the evader. For example, in Figure \ref{fig:distance}, the dominant corner is connected to the pursuer and evader by blue and red dashed lines around which the pursuit field is generated.

\begin{figure}[thbp]
	\centering
	\includegraphics[width=0.9\linewidth]{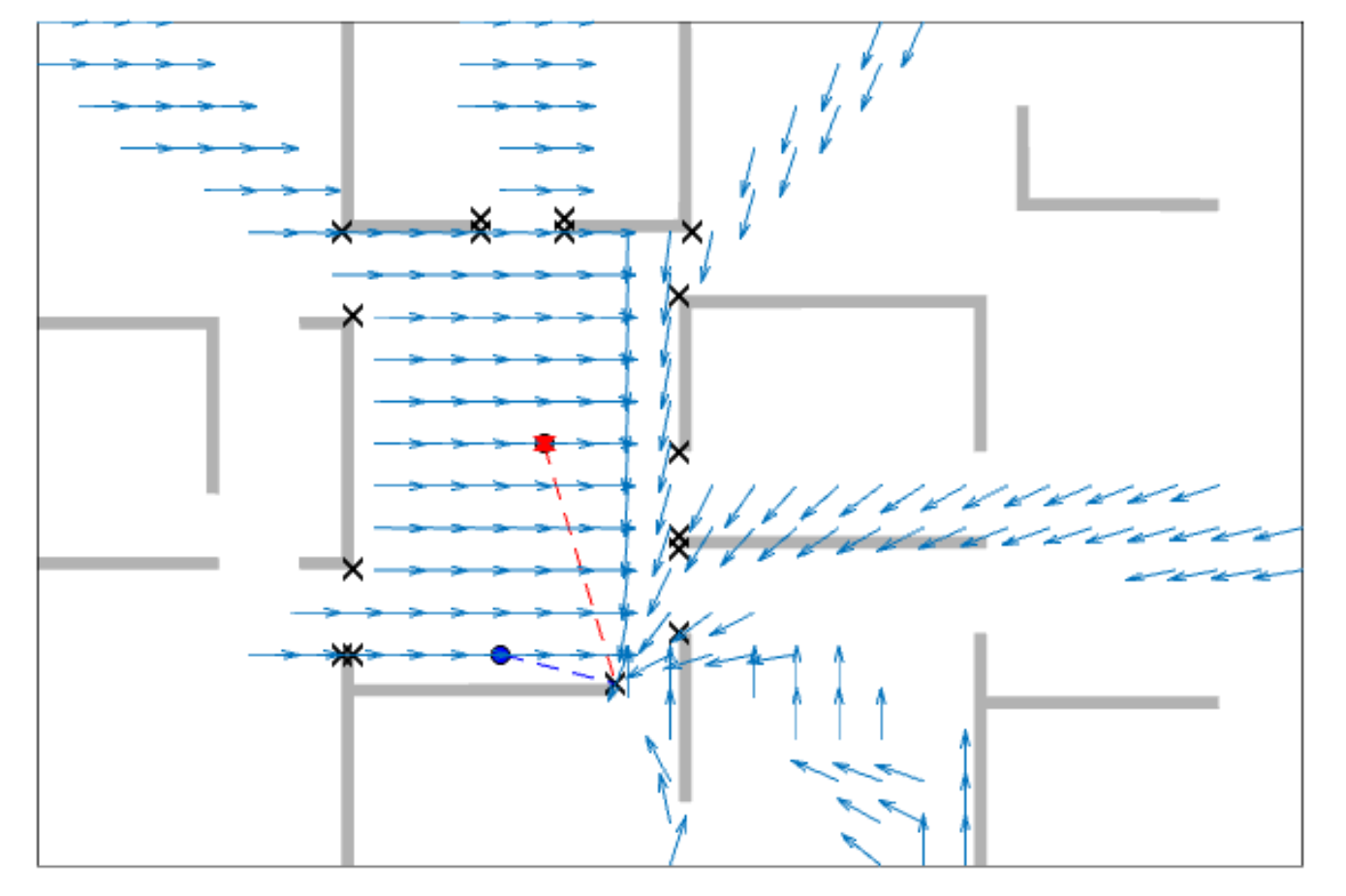}
	\caption{Pursuit field for distance-based strategy. Grey region: Obstacles. Red dot: Evader. Blue dot: Pursuer. Black crosses: corners in $\mathcal{V}^*$}
	\label{fig:distance}
	\vspace{-5 mm}
\end{figure}

\subsection{Time-Based Weighted Sum}

Since the objective of the pursuer is to track the evader for the maximum possible time, the tracking time obtained in Section \ref{sec:corner} is a candidate measure of allocating weight. Considering the fact that the evader wants to break LOS in the shortest time, it is reasonable to allocate more weight to vectors which correspond to shorter tracking time. Therefore, we can propose a feasible weight functions based on tracking time as follows.
\begin{equation}
	w_i(T_i) = \frac{1}{T_i}, \label{eqn:weighted_sum}
\end{equation}
where $T_i$ represents the pursuer's tracking time around corner $i$. Figure \ref{fig:vec1} demonstrates the pursuit field generated from weight function (\ref{eqn:weighted_sum}).
\begin{figure}[thbp]
	\centering
	\includegraphics[width=0.9\linewidth]{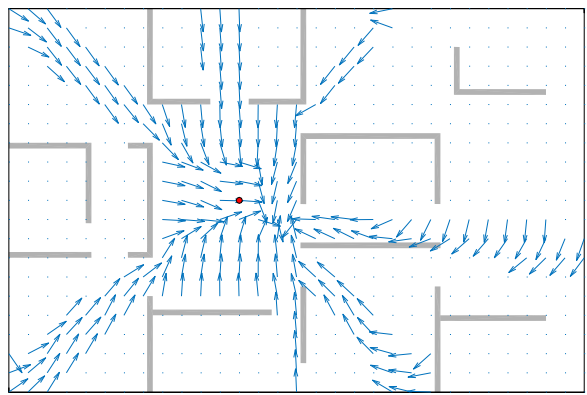}
	\caption{Pursuit field from weight function (\ref{eqn:weighted_sum}). Grey region: Obstacles. Red dot: Evader}
	\label{fig:vec1}
\end{figure}
\begin{figure}[thbp]
	\centering
	\includegraphics[width=0.9\linewidth]{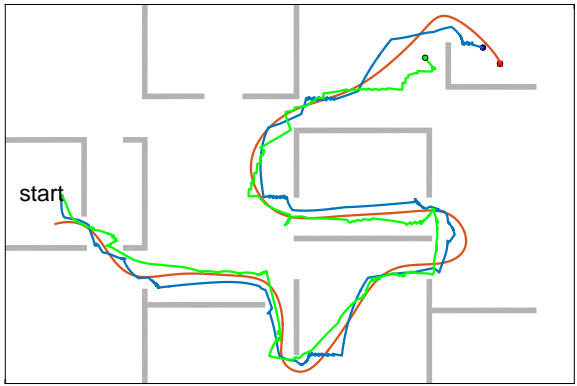}
	\caption{Trajectories of two agents. Red: evader. Green: pursuer applying time-based weighted sum of vector fields. Blue: pursuer applying distance based trajectory.}
	\label{fig:general_traj}
	\vspace{-5 mm}
\end{figure}
The aforementioned pursuit fields are constructed to guide a pursuer in simulations. In simulations, we observed that the pursuit field itself has some local constraints. Therefore, We included a vector which points towards the evader. The summation of this vector together with the pursuit field determines the pursuer' trajectory. Figure \ref{fig:general_traj} presents the trajectories of the pursuer when the evader follows a predefined path in a general environment. (The evader's path is not known to the pursuer.)

In this section, we proposed two techniques on generating tracking trajectories for the pursuer. Due to the unpredictable nature of the evader's motion and the complexity of the environment, the guaranteed tracking time in an environment with a single corner no longer holds. To the best of the authors' knowledge, the problem of finding the optimal tracking strategy of the pursuer in such an environment remains unsolved. The discussion in this section is meant to present the potential application of solutions in Section \ref{sec:corner}. Investigating various weight function and evaluating their performance is our ongoing work.

\section{Conclusion}
\label{sec:conclusion}
	
In this paper, we addressed the visibility-based target-tracking problem in a general environment. We first formulated the problem as an optimal control problem with state inequality constraint in a simple environment with one corner. We identified and analyzed two subproblems which emerge from the activation of the boundary conditions. A complete optimal tracking strategy for the pursuer which provides the maximum possible tracking time considering the worst case scenario was obtained following from the optimal control problem. To exploit the optimality of the solution in simple environment, we proposed techniques based on the optimal solutions to generate a pursuit field to guide the pursuer in a general polygonal environment. 

The idea of pursuit field is meant to explore potential applications of the optimal solution in simple environment. Due to the complexity of a general environment and the uncertainty of the evader's motion, guaranteed tracking has been a challenging problem. We believe the optimal strategy obtained in this paper can be a building block or primitive for future investigation of the problem. We will explore various approaches of extending the solution to improve the tracking performance in a general environment. We will also investigate the tracking problem when both agents have dynamic constraints.

\bibliographystyle{IEEEtran}
\bibliography{mybib}

\begin{thebibliography}{10}
\providecommand{\url}[1]{#1}
\csname url@rmstyle\endcsname
\providecommand{\newblock}{\relax}
\providecommand{\bibinfo}[2]{#2}
\providecommand\BIBentrySTDinterwordspacing{\spaceskip=0pt\relax}
\providecommand\BIBentryALTinterwordstretchfactor{4}
\providecommand\BIBentryALTinterwordspacing{\spaceskip=\fontdimen2\font plus
\BIBentryALTinterwordstretchfactor\fontdimen3\font minus
  \fontdimen4\font\relax}
\providecommand\BIBforeignlanguage[2]{{%
\expandafter\ifx\csname l@#1\endcsname\relax
\typeout{** WARNING: IEEEtran.bst: No hyphenation pattern has been}%
\typeout{** loaded for the language `#1'. Using the pattern for}%
\typeout{** the default language instead.}%
\else
\language=\csname l@#1\endcsname
\fi
#2}}

\bibitem{bhattacharya2014vision}
S.~Bhattacharya, G.~Warnell, R.~Chellappa, and T.~Basar, ``Vision-guided
  feedback control of a mobile robot with compressive measurements and side
  information,'' in \emph{Amer. Control Conf.}, June 2014, pp. 97--102.

\bibitem{Warnell2015}
G.~Warnell, S.~Bhattacharya, R.~Chellappa, and T.~Basar, ``Adaptive-rate
  compressive sensing using side information,'' \emph{IEEE Trans. Image
  Process.}, vol.~24, no.~11, pp. 3846--3857, Nov. 2015.

\bibitem{LaValle2001}
S.~LaValle and J.~Hinrichsen, ``Visibility-based pursuit-evasion: the case of
  curved environments,'' \emph{IEEE Trans. Robot. Autom.}, vol.~17, no.~2, pp.
  196--202, Apr. 2001.

\bibitem{Chung2011}
T.~Chung, G.~Hollinger, and V.~Isler, ``\BIBforeignlanguage{English}{Search and
  pursuit-evasion in mobile robotics},''
  \emph{\BIBforeignlanguage{English}{Auton. Robots}}, vol.~31, no.~4, pp.
  299--316, July 2011.

\bibitem{LaValle1997}
S.~LaValle, H.~Gonzalez-Banos, C.~Becker, and J.-C. Latombe, ``Motion
  strategies for maintaining visibility of a moving target,'' in \emph{Proc.
  IEEE Int. Conf. Robot. Autom.}, vol.~1, Apr. 1997, pp. 731--736 vol.1.

\bibitem{Murrieta-Cid2007}
R.~Murrieta-Cid, T.~Muppirala, A.~Sarmiento, S.~Bhattacharya, and
  S.~Hutchinson, ``Surveillance strategies for a pursuer with finite sensor
  range,'' \emph{Int. J. Robot. Res.}, vol.~26, no.~3, pp. 233--253, 2007.

\bibitem{Bhattacharya2011}
S.~Bhattacharya and S.~Hutchinson, ``{A cell decomposition approach to
  visibility-based pursuit evasion among obstacles},'' \emph{Int. J. Robot.
  Res.}, vol.~30, no.~14, pp. 1709--1727, Sept. 2011.

\bibitem{Zou2016}
R.~Zou and S.~Bhattacharya, ``Visibility-based finite-horizon target tracking
  game,'' \emph{IEEE Robot. Autom. Lett.}, vol.~1, no.~1, pp. 399--406, Jan
  2016.

\bibitem{Balkcom2002}
D.~J. Balkcom and M.~T. Mason, ``Time optimal trajectories for bounded velocity
  differential drive vehicles,'' \emph{Int. J. Robot. Res.}, vol.~21, no.~3,
  pp. 199--217, Mar. 2002.

\bibitem{Chitsaz2009}
H.~Chitsaz, S.~M. LaValle, D.~J. Balkcom, and M.~T. Mason, ``Minimum
  wheel-rotation paths for differential-drive mobile robots,'' \emph{Int. J.
  Robot. Res.}, vol.~28, no.~1, pp. 66--80, Jan. 2009.

\bibitem{tokekar2014energy}
P.~Tokekar, N.~Karnad, and V.~Isler,
  ``\BIBforeignlanguage{English}{Energy-optimal trajectory planning for
  car-like robots},'' \emph{\BIBforeignlanguage{English}{Auton. Robots}},
  vol.~37, no.~3, pp. 279--300, Oct. 2014.

\bibitem{Ruiz2013}
U.~Ruiz, R.~Murrieta-Cid, and J.~L. Marroquin, ``{Time-optimal motion
  strategies for capturing an omnidirectional evader using a differential drive
  robot},'' \emph{IEEE Trans. Robot.}, vol.~29, no.~5, pp. 1180--1196, Oct.
  2013.

\bibitem{Jacobo2015}
D.~Jacobo, U.~Ruiz, R.~Murrieta-Cid, H.~Becerra, and J.~Marroquin, ``A visual
  feedback-based time-optimal motion policy for capturing an unpredictable
  evader,'' \emph{Int. J. Control}, vol.~88, no.~4, pp. 663--681, 2015.

\bibitem{Murrieta-Cid2011}
R.~Murrieta-Cid, U.~Ruiz, J.~L. Marroquin, J.~P. Laumond, and S.~Hutchinson,
  ``{Tracking an omnidirectional evader with a differential drive robot},''
  \emph{Auton. Robots}, vol.~31, no.~4, pp. 345--366, Aug. 2011.

\bibitem{Jacobs1993}
P.~Jacobs and J.~Canny, \emph{Nonholonomic Motion Planning}.\hskip 1em plus
  0.5em minus 0.4em\relax Boston, MA: Springer US, 1993, ch. Planning Smooth
  Paths for Mobile Robots, pp. 271--342.

\bibitem{Boissonnat1996}
J.-D. Boissonnat and S.~Lazard, ``A polynomial-time algorithm for computing a
  shortest path of bounded curvature amidst moderate obstacles,'' in
  \emph{Proceedings of the Twelfth Annual Symposium on Computational
  Geometry}.\hskip 1em plus 0.5em minus 0.4em\relax New York, NY: ACM, 1996,
  pp. 242--251.

\bibitem{Lavalle2000}
S.~M. Lavalle, J.~J. Kuffner, and Jr., ``Rapidly-exploring random trees:
  Progress and prospects,'' in \emph{Algorithmic and Computational Robotics:
  New Directions}, 2000, pp. 293--308.

\bibitem{nikhil09iros}
N.~Karnad and V.~Isler, ``Lion and man game in the presence of a circular
  obstacle,'' in \emph{IEEE Int. Conf. Intell. Robot. Syst.}, Oct. 2009, pp.
  5045--5050.

\bibitem{Bhattacharya2010}
S.~Bhattacharya and S.~Hutchinson, ``On the existence of {Nash} equilibrium for
  a two-player pursuit—evasion game with visibility constraints,'' \emph{Int.
  J. Robot. Res.}, vol.~29, no.~7, pp. 831--839, June 2010.

\bibitem{Bhattacharya2016}
S.~Bhattacharya, T.~Ba{\c{s}}ar, and N.~Hovakimyan, ``A visibility-based
  pursuit-evasion game with a circular obstacle,'' \emph{Journal of
  Optimization Theory and Applications}, pp. 1--12, 2016.

\bibitem{Bryson1975applied}
A.~E. Bryson and Y.-C. Ho, \emph{Applied optimal control: optimization,
  estimation and control}.\hskip 1em plus 0.5em minus 0.4em\relax New York, NY:
  CRC Press, 1975.

\bibitem{Mengzhe2014}
M.~Zhang and S.~Bhattacharya, ``Multi-agent visibility based target tracking
  game,'' in \emph{Int. Symp. Distrib. Auton. Robot. Syst.}, Nov. 2014, pp.
  440--451.

\end{thebibliography}

\end{document}